\newtheorem{theorem}{Theorem}[section]
\theoremstyle{definition}
\newtheorem{example}[theorem]{Example}
\theoremstyle{remark}
\numberwithin{equation}{section}
\begin{document}

\title{A short note on a definite integral of a general form}


\author{Robert Reynolds}
\address[Robert Reynolds]{Department of Mathematics and Statistics, York University, Toronto, ON, Canada, M3J1P3}
\email[Corresponding author]{milver73@gmail.com}
\thanks{}


\subjclass[2020]{Primary  30E20, 33-01, 33-03, 33-04}

\keywords{Definite integral, Cauchy integral theorem, Cauchy principal value, Catalan's constant, Ap\'{e}ry's constant, Malmsten}

\date{}

\dedicatory{}

\begin{abstract}
In this note we derive some interesting definite integrals involving Malmsten logarithm forms, reciprocal logarithm forms and K\"{o}lbig type integrals in terms of special functions.
\end{abstract}

\maketitle
\section{Introduction}
The famous book on Tables of integrals and series by Gradshteyn and Ryzhik \cite{grad}, currently edited by Dan Zwillinger [Gradshteyn and Ryzhik,\href{https://www.mathtable.com/gr/}{2014}] and Professor Victor Moll \cite{moll}, has many interesting definite integrals without derivations. In this note we look at one integral listed as (3.228.6(12)) in \cite{grad}, which we think is of interest to the scientific community. We produce the correct version of this integral, a formal derivation and evaluations in terms of fundamental constants and special functions. We evaluate nested $\log(\log(x))$  forms which were initially studied by Malmsten \cite{malmsten,malmsten1}, and further studied by Blagouchine \cite{blagouchine}, Moll \cite{moll1}, Median \cite{medina} and listed in Table 148 in \cite{bdh}. Further reading on these types of integrals are on a Wikipedia website [Malmsten,\href{https://en.wikipedia.org/wiki/Carl_Johan_Malmsten}{Malmsten}] hosting more information.\\\\
The derivation of the definite integral of a general form involving powers of the logarithm and other powers is achieved using contour integration \cite{reyn4}. Section (3.228) in Gradshteyn and Ryzhik contains integrals which have many applications in mathematics. The integral being studied in this work could be viewed as a Stieltjes transform see chapter XIV in \cite{erdt2}. The Stieltjes transform can be found in works on Laplace transforms by Widder (1941, Chapter VIII) \cite{widder} and Titchmarsh (1937, sections 11.8, 11.9) \cite{titchmarsh}. Stieltjes transforms are also connected with the moment problem for the semi-infinite interval (Shohat and Tamarkin, 1943) and hence with certain continued fractions \cite{shohat}.\\\\
Some applications of integrals in section (3.228) of  Gradshteyn and Ryzhik are; in obtaining quadrature formulae for singular integrals are derived that retain the nice features of Gauss Chebyshev quadrature \cite{venturino}, determining the support of the equilibrium measure in the presence of a monomial external field on [-1,1], \cite{damelin}, considering the tangential-displacement effects in the wedge indentation of an elastic half-space - an integral-equation approach \cite{georgiadis}, and in the study of dynamic frictional indentation of an elastic half-plane by a rigid punch \cite{brock}.\\\\
Errata: A possible derivation of Eq.  (3.228.6(12)) in \cite{grad} is a special case of Eq. (3.228.5(12)) in \cite{grad} and Mathematica by Wolfram given as;
\begin{multline}\label{eq1}
\int_0^{\infty } \frac{x^{v-1} (x+a)^{1-u}}{x-c} \, dx\\
=\frac{(-c)^{-u+v} (a+c) \left(\frac{a+c}{c}\right)^{-u} \pi  \csc (\pi  (u-v))}{c}\\+\frac{a^{1-u+v} \pi  \csc (\pi  (u-v))
   \Gamma (v) \, _2F_1\left(1,v;2-u+v;-\frac{a}{c}\right)}{c \Gamma (-1+u) \Gamma (2-u+v)}
\end{multline}
where $Re(a)>0,Re(c)\leq 0,Re(u)>Re(v)>0$.\\
When $u$ is an integer, $n$ in equation (\ref{eq1}), we get the correct form of Eq.  (3.228.6(12)) in \cite{grad} given by;
\begin{multline}\label{eq2}
\int_0^{\infty } \frac{x^{v-1} (\gamma +x)^{-n}}{x+b} \, dx=\frac{\pi  b^{v-1} }{\sin (\pi  v) (\gamma -b)^n}\left(1-\left(\frac{\gamma
   }{b}\right)^{v-1} \sum _{j=0}^{n-1} \frac{(1-v)_j \left(\frac{\gamma -b}{\gamma }\right)^j}{j!}\right)
\end{multline}
where $|\arg{\beta}|<\pi,|\arg{\gamma}|<\pi,0< Re(v) < n$.
\section{Preliminaries}
Throughout this work we will use the Hurwitz-Lerch zeta function given in [DLMF,\href{https://dlmf.nist.gov/25.14}{25.14}], and its special cases; the Hurwitz zeta function $\zeta(s,a)$, [DLMF,\href{https://dlmf.nist.gov/25.11}{25.11}] and the polylogarithm $Li_{s}(z)$, [DLMF,\href{https://dlmf.nist.gov/25.12#ii}{25.12(ii)}], the log-gamma function given in [DLMF,\href{https://dlmf.nist.gov/25.11#vi.info}{25.11.18}], Catalan's constant $C$, [DLMF,\href{https://dlmf.nist.gov/26.6.E12}{25.11.40}], Glaisher's constant $A$, [DLMF,\href{https://dlmf.nist.gov/5.17.E5}{5.17.5}], Apery's constant $\zeta(3)$, [Wolfram,\href{https://mathworld.wolfram.com/AperysConstant.html}{1}], the Pochhammer's symbol $(a)_{n}$, [DLMF,\href{https://dlmf.nist.gov/5.2.iii}{5.2.5}], Stieltjes constant $\gamma_{n}$, [DLMF,\href{https://dlmf.nist.gov/25.2.E5}{25.2.5}]
\section{Contour integral representations}
In this section we apply the method in \cite{reyn4} to equation (\ref{eq2}) to derive the contour integral representations used to derive the main theorem in this work, proceeded by its evaluations.  The interchange of multiple integrals and integral and summation are achieved by Tonelli's and Fubini's theorems see pp. 177 in \cite{gelca}. Based on equation (\ref{eq2}) the three contour integral representations are;
\subsection{Left-hand side contour integral}
\begin{multline}\label{eq:d1}
\int_{0}^{\infty}\frac{x^{-1+m} (x+\gamma )^{-n} \log ^k(a x)}{(b+x) \Gamma(k+1)}dx\\
=\frac{1}{2\pi i}\int_{0}^{\infty}\int_{C}\frac{w^{-1-k} x^{-1+m} (a x)^w (x+\gamma )^{-n}}{b+x}dxdw\\
=\frac{1}{2\pi i}\int_{C}\int_{0}^{\infty}\frac{w^{-1-k} x^{-1+m} (a x)^w (x+\gamma )^{-n}}{b+x}dxdw
\end{multline}
\subsection{First right-hand side contour integral}
\begin{multline}\label{eq:d2}
-\frac{b^{-1+m} e^{i m \pi } (2 i \pi )^{1+k} (-b+\gamma )^{-n} \Phi \left(e^{2 i m \pi },-k,-\frac{i (i \pi +\log (a)+\log (b))}{2 \pi }\right)}{\Gamma(k+1)}\\
=\frac{1}{2\pi i}\int_{C}a^w b^{-1+m+w} \pi  w^{-1-k} (-b+\gamma )^{-n}
   \csc (\pi  (m+w))dw
\end{multline}
\subsection{Second right-hand side contour integral}
\begin{multline}\label{eq:d3}
-\sum _{j=0}^{n-1}
   \sum _{p=0}^j \sum _{l=0}^p \sum _{h=0}^{p-l}\frac{(-1)^{j-l} e^{i m \pi } m^{h+l-p} (2 i \pi )^{1-h+k} \gamma ^{-1+m} (-b+\gamma )^{-n} \left(\frac{-b+\gamma }{\gamma }\right)^j \binom{p}{l} \binom{-l+p}{h} }{j! (-h+k)!}\\
\times \Phi \left(e^{2 i m \pi
   },h-k,-\frac{i (i \pi +\log (a)+\log (\gamma ))}{2 \pi }\right) S_j^{(p)}\\
   =\frac{1}{2\pi i}\sum _{j=0}^{n-1}
   \sum _{p=0}^j \sum _{l=0}^p \sum _{h=0}^{p-l}\int_{C}\frac{(-1)^{j-l} a^w m^{h+l-p} \pi  w^{-1+h-k} \gamma ^{-1+m+w} (-b+\gamma )^{-n} \left(\frac{-b+\gamma }{\gamma
   }\right)^j \binom{p}{l} \binom{-l+p}{h} }{j!}\\
   \times \csc (\pi  (m+w)) S_j^{(p)}dw\\
   =\frac{1}{2\pi i}\int_{C}\sum _{j=0}^{n-1}
   \sum _{p=0}^j \sum _{l=0}^p \sum _{h=0}^{p-l}\frac{(-1)^{j-l} a^w m^{h+l-p} \pi  w^{-1+h-k} \gamma ^{-1+m+w} (-b+\gamma )^{-n} \left(\frac{-b+\gamma }{\gamma
   }\right)^j \binom{p}{l} \binom{-l+p}{h} }{j!}\\
   \times \csc (\pi  (m+w)) S_j^{(p)}dw\\
\end{multline}
%
\section{Main theorem and evaluations}
In this section we evaluate the main theorem and analyze special cases of known integral forms.
\begin{theorem}
\begin{multline}\label{eq:theorem}
\int_0^{\infty } \frac{x^{m-1} \log ^k(a x)}{(b+x) (x+\gamma )^n} \, dx\\
=b^{m-1} (-1)^{m+1} (2 i \pi )^{k+1}
   (\gamma -b)^{-n} \Phi \left(e^{2 i m \pi },-k,-\frac{i (i \pi +\log (a)+\log (b))}{2 \pi }\right)\\
+\sum _{j=0}^{n-1}
   \sum _{p=0}^j \sum _{l=0}^p \sum _{h=0}^{p-l} \frac{(-1)^{j-l} (-1)^m m^{p-l-h} (2 i \pi )^{1-h+k} \gamma ^{m-1}
   (\gamma -b)^{-n} \left(\frac{\gamma -b}{\gamma }\right)^j \binom{p}{l} }{j!}\\
\times \binom{-l+p}{h} (1-h+k)_h \Phi \left(e^{2 i m
   \pi },h-k,-\frac{i (i \pi +\log (a)+\log (\gamma ))}{2 \pi }\right) S_j^{(p)}
\end{multline}
where $0< Re(m) < 1,0< Im(m) < 1$.
\end{theorem}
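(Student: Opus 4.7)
The plan is to derive (\ref{eq:theorem}) by verifying the three contour-integral representations recorded in equations (\ref{eq:d1})--(\ref{eq:d3}) and then combining them via the corrected Gradshteyn--Ryzhik identity (\ref{eq2}). The key observation is that once the integrand on the right of (\ref{eq:d1}) is shown to equal the sum of the integrands on the right of (\ref{eq:d2}) and (\ref{eq:d3}), the equality of the three contour integrals over $C$ yields an identity among their closed-form evaluations, and multiplying through by $\Gamma(k+1)$ recovers the stated theorem.

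First I would establish representation (\ref{eq:d1}) by writing the logarithmic factor as the Hankel-type contour integral
\[
\frac{\log^k(ax)}{\Gamma(k+1)} = \frac{1}{2\pi i}\int_C w^{-1-k}(ax)^w\,dw,
\]
where $C$ is the contour around the negative real axis used in \cite{reyn4}. Multiplying by $x^{m-1}(x+\gamma)^{-n}/(b+x)$, integrating over $x \in (0,\infty)$, and swapping the order of integration by Tonelli's and Fubini's theorems under $0 < Re(m) < 1$ yields both equalities in (\ref{eq:d1}). Next, I would evaluate the inner $x$-integral
\[
\int_0^\infty \frac{x^{m+w-1}(x+\gamma)^{-n}}{b+x}\,dx
\]
by applying (\ref{eq2}) with $v \mapsto m+w$. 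The first term of (\ref{eq2}) reproduces the contour integrand on the right-hand side of (\ref{eq:d2}), while the finite sum over $j$ yields, after expanding the Pochhammer symbol $(1-v)_j$ in powers of $v$ by means of Stirling numbers of the first kind $S_j^{(p)}$ and the binomial theorem, the nested summation appearing in (\ref{eq:d3}).

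The remaining verification is that each of the two contour integrals on the right of (\ref{eq:d2}) and (\ref{eq:d3})---both of the common form $\int_C a^w w^s \csc(\pi(m+w))\,dw$---evaluates to the closed expression on the corresponding left-hand side in terms of the Hurwitz--Lerch zeta function $\Phi$. To check this I would expand $\csc(\pi(m+w))$ as a geometric series in $e^{2i\pi(m+w)}$, interchange summation and integration, and identify the resulting series with the defining representation of $\Phi$ in [DLMF,25.14]. The hard part will be justifying this interchange, together with the exchange of the finite quadruple sum with the contour integral in (\ref{eq:d3}) and the rewriting $1/(k-h)! = (1-h+k)_h/\Gamma(k+1)$ needed to produce the Pochhammer form appearing in the theorem. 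The hypotheses $0 < Re(m) < 1$ and $0 < Im(m) < 1$ are imposed precisely so that the geometric series converges absolutely and the integrand decays rapidly along the Hankel contour, ensuring the validity of the whole chain of manipulations.
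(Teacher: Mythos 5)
Your proposal follows essentially the same route as the paper: it derives the three contour-integral representations (the Cauchy kernel for $\log^k(ax)$, Fubini/Tonelli, substitution of the corrected formula (\ref{eq2}) with $v\mapsto m+w$, Stirling-number expansion of the Pochhammer factor, and geometric-series expansion of $\csc(\pi(m+w))$ into Hurwitz--Lerch $\Phi$ terms) and then equates the closed forms, exactly as the paper does via equations (\ref{eq:d1})--(\ref{eq:d3}) and the Pochhammer simplification $1/(k-h)!=(1-h+k)_h/\Gamma(k+1)$. In fact you spell out more of the mechanism than the paper's one-line proof, so no further comparison is needed.
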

\begin{proof}
This formula is achieved by observing the right-hand side of equation (\ref{eq:d1}) is equal to the addition of the right-hand sides of equations (\ref{eq:d2}) and (\ref{eq:d3}) relative to equation (\ref{eq2}). The Pochhammer symbol is applied to simplify the right-hand side.
\end{proof}
\begin{example}
Table (2.6.6) in \cite{prud1} can be derived as special cases of the main theorem. For example (2.6.6.17) in \cite{prud1}. This is also a general solution for Problems 3.1  (c) (i) and (i) and Eq. (4.66) in \cite{gordon}.
\begin{multline}
\int_0^{\infty } \frac{\log ^k(x)}{(x+\beta ) (x+\gamma )} \, dx=\frac{(2 i \pi )^{1+k} \left(-\zeta
   \left(-k,\frac{\pi -i \log (\beta )}{2 \pi }\right)+\zeta \left(-k,\frac{\pi -i \log (\gamma )}{2 \pi
   }\right)\right)}{\beta -\gamma }
\end{multline}
where $Re(k)>-1$.
\end{example}
\begin{example}
An alternate form to Eq. (6.4.12) in \cite{erdt1}.
\begin{multline}
\int_0^{\infty } \frac{\log ^n(x)}{x^2+\alpha ^2+2 a x \cos (\theta )} \, dx\\
=\frac{2^{\frac{1}{2}+n} (i \pi
   )^{1+n} }{\sqrt{a^2-2 \alpha
   ^2+a^2 \cos (2 \theta )}}\left(\zeta \left(-n,\frac{\pi -i \log \left(a \cos (\theta )-\frac{\sqrt{a^2-2 \alpha ^2+a^2 \cos (2\theta )}}{\sqrt{2}}\right)}{2 \pi }\right)\right. \\ \left.
-\zeta \left(-n,\frac{\pi -i \log \left(a \cos (\theta
   )+\frac{\sqrt{a^2-2 \alpha ^2+a^2 \cos (2 \theta )}}{\sqrt{2}}\right)}{2 \pi }\right)\right)\\
=-\pi  \cos (\theta ) \frac{\partial ^n\left(\alpha ^{s-2} \csc (s \pi ) \sin ((s-1)
   \theta )\right)}{\partial s^n}
\end{multline}
where $0< Re(s) < 2$.
\end{example}
\begin{example}
\begin{equation}
\int_0^{\infty } \frac{\log (\log (x))}{(x+\beta ) (x+\gamma )} \, dx=\frac{\log (2 \pi ) \log
   \left(\frac{\beta }{\gamma }\right)+2 i \pi  \log \left(\frac{\sqrt[4]{\beta } \Gamma \left(\frac{\pi -i \log
   (\beta )}{2 \pi }\right)}{\sqrt[4]{\gamma } \Gamma \left(\frac{\pi -i \log (\gamma )}{2 \pi
   }\right)}\right)}{\beta -\gamma }
\end{equation}
\end{example}
\begin{example}
Errata 148(1) in \cite{bdh}. 
\begin{equation}
\int_0^{\infty } \frac{\log (\log (x))}{1+x^2} \, dx=\frac{1}{2} \pi  \log \left(\frac{2 i \pi  \Gamma
   \left(\frac{3}{4}\right)^2}{\Gamma \left(\frac{1}{4}\right)^2}\right)
\end{equation}
\end{example}
\begin{example}
Errata 148(2) in \cite{bdh}. 
\begin{equation}
\int_0^{\infty } \frac{\log (\log (x))}{1+x+x^2} \, dx=\frac{\pi  }{\sqrt{3}}\log \left(\frac{4 \sqrt[3]{-1} \pi
   ^{5/3}}{\Gamma \left(\frac{1}{6}\right)^2}\right)
\end{equation}
\end{example}
\begin{example}
Extended Table 148(5) Errata in \cite{bdh}.
\begin{equation}
\int_0^{\infty } \frac{\log (\log (x))}{1-x+x^2} \, dx=\frac{2 \pi  }{3 \sqrt{3}}\left(i \pi +\log \left(\frac{4 \pi ^2 \Gamma
   \left(\frac{5}{6}\right)^3}{\Gamma \left(\frac{1}{6}\right)^3}\right)\right)
\end{equation}
\end{example}
\begin{example}
Table 148(5) Errata in \cite{bdh}.
\begin{equation}
\int_1^{\infty } \frac{\log (\log (x))}{1-x+x^2} \, dx=\frac{\pi }{3 \sqrt{3}} \log \left(\frac{4 \pi ^2 \Gamma
   \left(\frac{5}{6}\right)^3}{\Gamma \left(\frac{1}{6}\right)^3}\right)
   \end{equation}
\end{example}
\begin{example}
\begin{equation}
\int_0^{\infty } \frac{\log (\log (x))}{x^2+\beta ^2} \, dx=\frac{\pi  }{\beta }\log \left(\frac{(1+i) \sqrt{\pi }
   \Gamma \left(\frac{\pi -i \log (i \beta )}{2 \pi }\right)}{\Gamma \left(\frac{\pi -i \log (-i \beta )}{2 \pi
   }\right)}\right)
\end{equation}
\end{example}
\begin{example}
\begin{multline}
\int_0^{\infty } \frac{\sqrt{x} \log ^k(-x)}{\left(1+x^2\right) (1+x)^n} \, dx\\
=i^k 2^{2 k-n} \pi ^{1+k}
   \left(\sqrt[4]{-1} (1-i)^n \zeta \left(-k,\frac{3}{8}\right)-\sqrt[4]{-1} (1-i)^n \zeta
   \left(-k,\frac{7}{8}\right)\right. \\ \left.-(-1)^{3/4} (1+i)^n \left(\zeta \left(-k,\frac{5}{8}\right)-\zeta
   \left(-k,\frac{9}{8}\right)\right)\right)\\
+\sum _{j=0}^{n-1} \sum _{p=0}^j \sum _{l=0}^p \sum _{h=0}^{p-l}
   \frac{(-1)^{j-l} i^{-h+k} 2^{-h+k+l-n-p} \left((1-i)^n (1+i)^j+(1-i)^j (1+i)^n\right) }{j!}\\
\times \left(-2^h+2^{1+k}\right)
   \pi ^{1-h+k} \binom{p}{l} \binom{-l+p}{h} (1-h+k)_h S_j^{(p)} \zeta (h-k)
\end{multline}
\end{example}
\begin{example}
\begin{multline}
\int_0^{\infty } \frac{\left(x^{s-1}-x^{m-1}\right) (x+\gamma )^{-n}}{(b+x) \log (x)} \, dx\\
=\frac{(-b+\gamma
   )^{-n} \left((-1)^m b^m \Phi \left(e^{2 i m \pi },1,-\frac{i (i \pi +\log (b))}{2 \pi }\right)-(-1)^s b^s \Phi
   \left(e^{2 i \pi  s},1,-\frac{i (i \pi +\log (b))}{2 \pi }\right)\right)}{b}\\+\sum _{j=0}^{n-1} \sum _{p=0}^j \sum _{l=0}^p \sum _{h=0}^{p-l} \frac{i (-1)^{j-l} i^{-1-h} m^{-h-l} (2 \pi )^{-h} s^{-h-l} \left(1-\frac{b}{\gamma }\right)^j (-b+\gamma )^{-n} \binom{p}{l} \binom{-l+p}{h} }{\gamma  j!}\\
\times \left(-(-1)^m m^p s^{h+l} \gamma ^m \Phi \left(e^{2 i m\pi },1+h,-\frac{i (i \pi +\log (\gamma ))}{2 \pi }\right)\right. \\ \left.
+(-1)^s m^{h+l} s^p \gamma ^s \Phi \left(e^{2 i \pi s},1+h,-\frac{i (i \pi +\log (\gamma ))}{2 \pi }\right)\right) (-h)_h S_j^{(p)}
\end{multline}
\end{example}
\begin{example}
\begin{multline}
\int_0^{\infty } \frac{(x+\gamma )^{-n} \log ^2\left(-\frac{x}{b}\right) \log \left(\log \left(-\frac{x}{b}\right)\right)}{\sqrt{x} (b+x)} \, dx
=\frac{14 \pi  (-b+\gamma )^{-n} \zeta
   (3)}{\sqrt{b}}\\
-\sum _{j=0}^{n-1} \sum _{p=0}^j \sum _{l=0}^p \sum _{h=0}^{p-l} \frac{(-1)^{j-l} i^{-h} 2^{3-h+l-p} \pi ^{3-h} \left(1-\frac{b}{\gamma }\right)^j (-b+\gamma )^{-n} \binom{p}{l}
   \binom{-l+p}{h} (3-h)_h S_j^{(p)} }{\sqrt{\gamma }
   j!}\\
\times \left(2 \left(\zeta \left(-2+h,\frac{4 \pi +i \log (b)-i \log (\gamma )}{4 \pi }\right)-\zeta \left(-2+h,-\frac{i (2 i \pi -\log (b)+\log (\gamma ))}{4 \pi
   }\right)\right)\right. \\ \left.
 \left(3+i \pi -2 H_{2-h}+\log (4)+2 \log (\pi )\right)+2^h \Phi'\left(-1,-2+h,-\frac{i (2 i \pi -\log (b)+\log (\gamma ))}{2 \pi }\right)\right)
\end{multline}
\end{example}
\begin{example}
\begin{multline}
\int_0^{\infty } \frac{\log ^{1+2 k}(x)}{1-b x+x^2} \, dx\\
=\frac{i^{2 k} (2 \pi )^{2 (1+k)} }{\sqrt{-4+b^2}}\left(-\zeta
   \left(-1-2 k,\frac{\pi -i \log \left(\frac{1}{2} \left(-b-\sqrt{-4+b^2}\right)\right)}{2 \pi }\right)\right. \\ \left.
   +\zeta
   \left(-1-2 k,\frac{\pi -i \log \left(\frac{1}{2} \left(-b+\sqrt{-4+b^2}\right)\right)}{2 \pi
   }\right)\right)
\end{multline}
\end{example}
\begin{example}
\begin{equation}
\int_0^{\infty } \frac{\sqrt{x} \log (\log (x))}{(1+x)^2} \, dx=\pi  \log
   \left(\frac{\left(\frac{1}{3}+\frac{i}{3}\right) e^{-\frac{i}{2}} \sqrt{2 \pi } \Gamma
   \left(-\frac{1}{4}\right)}{\Gamma \left(-\frac{3}{4}\right)}\right)
\end{equation}
\end{example}
\begin{example}
\begin{multline}
\int_0^{\infty } \frac{\log ^k(a x)}{(x+\gamma )^n} \, dx\\
=\sum _{j=0}^{n-1} \sum _{p=0}^j \sum _{l=0}^p \sum
   _{h=0}^{p-l} \frac{(-1)^{j-l} 2^{1-2 h+k-l+p} (i \pi )^{1-h+k} \gamma ^{1-n} \binom{p}{l} \binom{-l+p}{h} }{j!}\\
\times \zeta
   \left(h-k,-\frac{i (i \pi +\log (a)+\log (\gamma ))}{2 \pi }\right) (1-h+k)_h S_j^{(p)}
\end{multline}
\end{example}
\begin{example}
\begin{multline}
\int_0^{\infty } (x+\gamma )^{-n} \log ^k\left(\frac{x}{\gamma }\right) \, dx,\sum _{j=0}^{n-1} \sum _{p=0}^j
   \sum _{l=0}^p \sum _{h=0}^{p-l} \frac{(-1)^{j-l} 2^{1-2 h+k-l+p} \left(-1+2^{h-k}\right) (i \pi )^{1-h+k} }{j!}\\
\times \gamma
   ^{1-n} \binom{p}{l} \binom{-l+p}{h} (1-h+k)_h S_j^{(p)} \zeta (h-k)
\end{multline}
\end{example}
\begin{example}
\begin{multline}
\int_0^{\infty } \frac{(1+x)^{-n} \log (\log (x))}{\sqrt{\log (x)}} \, dx\\
=-\sum _{j=0}^{n-1} \sum _{p=0}^j \sum
   _{l=0}^p \sum _{h=0}^{p-l} \frac{(-1)^{\frac{1}{4}+j-l} i^{-h} 2^{-2 h-l+p} \pi ^{\frac{1}{2}-h} \binom{p}{l}
   \binom{-l+p}{h} \left(\frac{1}{2}-h\right)_h S_j^{(p)} }{j!}\\
\times \left(\left(\sqrt{2} \log (2)+\left(\sqrt{2}-2^{1+h}\right)
   \left(\log (i \pi )+\psi ^{(0)}\left(\frac{1}{2}\right)\right)\right.\right. \\ \left.\left.
+\left(-\sqrt{2}+2^{1+h}\right) \psi
   ^{(0)}\left(\frac{1}{2}-h\right)\right) \zeta \left(\frac{1}{2}+h\right)+\left(-\sqrt{2}+2^{1+h}\right) \zeta
   '\left(\frac{1}{2}+h\right)\right)
\end{multline}
\end{example}
\begin{example}
\begin{multline}
\int_0^{\infty } \frac{\log (\log (x))}{(1+x)^2 \sqrt{\log (x)}} \, dx\\
=\frac{(-1)^{3/4} \left(\left(8-2
   \sqrt{2}+\frac{1}{2} i \left(-4+\sqrt{2}\right) \pi -4 \log (\pi )+\sqrt{2} \log (2 \pi )\right) \zeta
   \left(\frac{3}{2}\right)-\left(-4+\sqrt{2}\right) \zeta '\left(\frac{3}{2}\right)\right)}{4 \sqrt{\pi }}
\end{multline}
\end{example}
\begin{example}
\begin{multline}
\int_0^{\infty } \frac{\left(-x^m+x^s\right) (x+\gamma )^{-n}}{(b+x) (a+\log (x))} \, dx\\
=(-b+\gamma )^{-n} \left(-(-1)^m b^m \Phi \left(e^{2 i m \pi },1,-\frac{i (i \pi +a+\log (b))}{2 \pi
   }\right)\right. \\ \left.
+(-1)^s b^s \Phi \left(e^{2 i \pi  s},1,-\frac{i (i \pi +a+\log (b))}{2 \pi }\right)\right)\\
-\sum _{j=0}^{n-1} \sum _{p=0}^j \sum _{l=0}^p \sum _{h=0}^{p-l} \frac{(-b+\gamma )^{-n} (-1)^{j-l}
   (1+m)^{-h-l} (2 i \pi )^{-h} (1+s)^{-h-l} \left(1-\frac{b}{\gamma }\right)^j \binom{p}{l} \binom{-l+p}{h} }{j!}\\
\times \left(-(-1)^m (1+m)^p (1+s)^{h+l} \gamma ^m \Phi \left(e^{2 i m \pi },1+h,-\frac{i (i \pi +a+\log
   (\gamma ))}{2 \pi }\right)\right. \\ \left.
+(-1)^s (1+m)^{h+l} (1+s)^p \gamma ^s \Phi \left(e^{2 i \pi  s},1+h,-\frac{i (i \pi +a+\log (\gamma ))}{2 \pi }\right)\right) (-h)_h S_j^{(p)}
\end{multline}
\end{example}
\begin{example}
\begin{multline}
\int_0^1 \frac{(x-1) \log (\log (x))}{\sqrt{x} (1+x) \left(1+x^2\right)} \, dx\\
=\frac{1}{4} \pi  \left(2
   \left(\sqrt{2}-2\right) \log (i \pi )+\sqrt{2} \log (16)+4 \sqrt[4]{-1} \log \left(\frac{3 \Gamma
   \left(-\frac{3}{8}\right)}{7 \Gamma \left(-\frac{7}{8}\right)}\right)+8 \log \left(\frac{3 \Gamma
   \left(-\frac{3}{4}\right)}{2 \Gamma \left(-\frac{1}{4}\right)}\right)\right. \\ \left.
+4 (-1)^{3/4} \log \left(\frac{5 \Gamma
   \left(-\frac{5}{8}\right)}{\Gamma \left(-\frac{1}{8}\right)}\right)\right)
\end{multline}
\end{example}
\begin{example}
\begin{multline}
\int_0^{\infty } \frac{\sqrt{x} \log (\log (x))}{(1+x)^4} \, dx\\
=-\frac{C}{2 \pi }+\frac{1}{96} \pi  \left(3 i \pi +2 \left(2 i+\log \left(\frac{64 \pi ^3}{729}\right)-6 \text{log$\Gamma
   $}\left(-\frac{3}{4}\right)+6 \text{log$\Gamma $}\left(-\frac{1}{4}\right)\right)\right)
\end{multline}
\end{example}
\begin{example}
\begin{multline}
\int_0^{\infty } \frac{\sqrt{x} \log (\log (x))}{(1+x)^5} \, dx\\
=\frac{1}{3072 \pi ^3}\left(-896 C \pi ^2+60 i \pi ^5-8 \pi ^4 \left(-16 i+30 \log (3)-15 \log (4)-15 \log (\pi )\right.\right. \\ \left.\left.
+30 \text{log$\Gamma
   $}\left(-\frac{3}{4}\right)-30 \text{log$\Gamma $}\left(-\frac{1}{4}\right)\right)+\psi ^{(3)}\left(\frac{1}{4}\right)-\psi ^{(3)}\left(\frac{3}{4}\right)\right)
   \end{multline}
\end{example}
\begin{example}
\begin{equation}
\int_0^{\infty } \frac{\sqrt{x} \log (\log (x))}{(1+x)^2} \, dx=\pi  \log \left(\frac{2 \sqrt[4]{-1} e^{-\frac{i}{2}} \sqrt{\pi } \Gamma \left(-\frac{1}{4}\right)}{3 \Gamma
   \left(-\frac{3}{4}\right)}\right)
\end{equation}
\end{example}
\begin{example}
\begin{equation}
\int_0^{\frac{\pi }{2}} \log (\log (\tan (t))) \, dt=\pi  \log \left(\frac{(1+i) \sqrt{\pi }
   \Gamma \left(-\frac{1}{4}\right)}{3 \Gamma \left(-\frac{3}{4}\right)}\right)
\end{equation}
\end{example}
\begin{example}
\begin{equation}
\int_0^{\infty } \frac{\sqrt{x} \log (\log (x))}{(1+x)^3} \, dx=-\frac{C}{\pi }+\frac{1}{4} \pi  \log \left(\frac{2 e^{\frac{i \pi }{4}} \sqrt{\pi } \Gamma \left(-\frac{1}{4}\right)}{3 \Gamma
   \left(-\frac{3}{4}\right)}\right)
   \end{equation}
\end{example}
\begin{example}
\begin{multline}
\int_0^{\infty } \frac{\left(-x^m+x^s\right) (x+\gamma )^{-n} \log ^k(a x)}{(b+x) (c+x)} \, dx\\
=\frac{(2 i \pi )^{1+k} (-b+\gamma )^{-n} (-c+\gamma )^{-n} }{b-c}\\
+\sum _{j=0}^{n-1} \sum _{p=0}^j \sum _{l=0}^p \sum _{h=0}^{p-l} \frac{i (-1)^{j-l} i^{-h+k} (1+m)^{-h-l} (2 \pi )^{1-h+k} (1+s)^{-h-l} (-b+\gamma )^{-n} (-c+\gamma )^{-n}
  }{(b-c) j!}\\
\times \left((-c+\gamma )^n \left((-1)^m b^m \Phi
   \left(e^{2 i m \pi },-k,-\frac{i (i \pi +\log (a)+\log (b))}{2 \pi }\right)\right.\right. \\ \left.\left.
+(-1)^{1+s} b^s \Phi \left(e^{2 i \pi  s},-k,-\frac{i (i \pi +\log (a)+\log (b))}{2 \pi }\right)\right)\right. \\ \left.
-(-b+\gamma )^n
   \left((-1)^m c^m \Phi \left(e^{2 i m \pi },-k,-\frac{i (i \pi +\log (a)+\log (c))}{2 \pi }\right)\right.\right. \\ \left.\left.
+(-1)^{1+s} c^s \Phi \left(e^{2 i \pi  s},-k,-\frac{i (i \pi +\log (a)+\log (c))}{2 \pi
   }\right)\right)\right)\\
\times  \left(\left(1-\frac{c}{\gamma }\right)^j (-b+\gamma )^n-\left(1-\frac{b}{\gamma }\right)^j (-c+\gamma )^n\right) \binom{p}{l} \binom{-l+p}{h}\\
 \left((-1)^m (1+m)^p (1+s)^{h+l} \gamma ^m \Phi \left(e^{2 i
   m \pi },h-k,-\frac{i (i \pi +\log (a)+\log (\gamma ))}{2 \pi }\right)\right. \\ \left.
+(-1)^{1+s} (1+m)^{h+l} (1+s)^p \gamma ^s \Phi \left(e^{2 i \pi  s},h-k,-\frac{i (i \pi +\log (a)+\log (\gamma ))}{2 \pi
   }\right)\right) (1-h+k)_h S_j^{(p)}
\end{multline}
\end{example}
\begin{example}
\begin{multline}
\int_0^{\infty } \frac{(1-x) \log (\log (x))}{\sqrt{x} \left(1+x+x^2\right)} \, dx\\
=-\pi  \left(4 \pi +i \left(\log \left(\frac{1728}{25}\right)+2 \log (\pi )-2 \left(\text{log$\Gamma
   $}\left(-\frac{5}{6}\right)+\text{log$\Gamma $}\left(-\frac{1}{6}\right)\right)\right)\right)
\end{multline}
\end{example}
\begin{example}
\begin{multline}
\int_0^{\infty } \frac{(1-x) \log (\log (x))}{\sqrt{x} (1+x)^2 \left(1+x+x^2\right)} \, dx\\
=\frac{1}{6} \pi  \left(6 i-3 i \left(-2+\sqrt{3}\right) \pi -16 \sqrt{3} \log (2)-(24+3 i) \log (3)+3
   \left(4-3 \sqrt{3}\right) \log (\pi )\right. \\ \left.
   +6 \log \left(\frac{16 \Gamma \left(-\frac{1}{4}\right)^4}{\Gamma \left(-\frac{3}{4}\right)^4}\right)+6 \sqrt{3} \log \left(\frac{5 \Gamma \left(-\frac{5}{6}\right)
   \Gamma \left(\frac{1}{6}\right)}{\Gamma \left(-\frac{1}{6}\right)}\right)\right)
\end{multline}
\end{example}
\begin{example}
\begin{multline}
\int_0^{\infty } \frac{(1-x) \log (\log (x))}{\sqrt{x} (1+x)^2 \left(1-x+x^2\right)} \, dx\\
=\frac{1}{18} \pi  \left(6 i+3 i \pi +2 i \sqrt{3} \cosh ^{-1}(2)+6 \log \left(\frac{4 \pi  \Gamma
   \left(-\frac{1}{4}\right)^2}{3^{3/2} \Gamma \left(-\frac{3}{4}\right)^2}\right)\right)
\end{multline}
\end{example}
\begin{example}
\begin{multline}
\int_0^{\infty } \frac{(1-x) \log (\log (x))}{\sqrt{x} (1+x)^2} \, dx= 2\int_0^{\infty } \frac{\left(1-x^2\right) \log (\log (x))}{\left(1+x^2\right)^2}\, dx=i \pi
\end{multline}
\end{example}
\subsection{Extended Kolbig forms}
In this section we look at extended forms by the some definite integral work published by K\"{o}lbig \cite{kolbig}. Similar work was published in Adamchik \cite{victor}, Geddes \cite{geddes} and Gr\"{o}bner \cite{grobner}.
\begin{example}
Eq. (3.5) and (3.7) in \cite{kolbig}.
\begin{multline}
\int_0^{\infty } x^m (x+\gamma )^{-n} \log ^k(x) \, dx\\
=\sum _{j=0}^{n-1} \sum _{p=0}^j \sum _{l=0}^p \sum
   _{h=0}^{p-l} \frac{(-1)^{2+j-l+m} (2+m)^{-h-l+p} (2 i \pi )^{1-h+k} \gamma ^{1+m-n} \binom{p}{l} \binom{-l+p}{h}
  }{j!}\\
\times  \Phi \left(e^{2 i (2+m) \pi },h-k,-\frac{i (i \pi +\log (\gamma ))}{2 \pi }\right) (1-h+k)_h
   S_j^{(p)}
\end{multline}
\end{example}
\begin{example}
Eq. (3.8) in \cite{kolbig}.
\begin{multline}
\int_0^{\infty } x^m (x-\gamma )^{-n} \log ^k(x) \, dx\\
=\sum _{j=0}^{n-1} \sum _{p=0}^j \sum _{l=0}^p \sum
   _{h=0}^{p-l} \frac{(-1)^{2+j-l+m} (2+m)^{-h-l+p} (2 i \pi )^{1-h+k} (-\gamma )^{1+m-n} \binom{p}{l}
   \binom{-l+p}{h} }{j!}\\ \times \Phi \left(e^{2 i (2+m) \pi },h-k,-\frac{i (i \pi +\log (-\gamma ))}{2 \pi }\right) (1-h+k)_h
   S_j^{(p)}
\end{multline}
\end{example}
\begin{example}
Eq. (3.9) in \cite{kolbig}.
\begin{multline}
\int_0^{\infty } x^m (x+\beta )^{\frac{1}{2}-n} \log ^k(x) \, dx\\
=\sum _{j=0}^{\left\lfloor
   n-\frac{3}{2}\right\rfloor } \sum _{p=0}^j \sum _{l=0}^p \sum _{h=0}^{p-l} \frac{(-1)^{2+j-l+m} (2+m)^{-h-l+p} (2
   i \pi )^{1-h+k} \beta ^{\frac{3}{2}+m-n} \binom{p}{l} \binom{-l+p}{h}}{j!}\\
\times  \Phi \left(e^{2 i (2+m) \pi },h-k,-\frac{i
   (i \pi +\log (\beta ))}{2 \pi }\right) (1-h+k)_h S_j^{(p)}
\end{multline}
where $n=\frac{2p+1}{2}$ and $p$ is any natural number.
\end{example}
\begin{example}
\begin{multline}
\int_0^{\infty } \left(-x^m+x^s\right)  \frac{\log ^k(a x)}{(x+\gamma )^{n}} \, dx\\
=\sum _{j=0}^{n-1} \sum _{p=0}^j \sum _{l=0}^p \sum _{h=0}^{p-l} \frac{i (-1)^{j-l} i^{-h+k} (2+m)^{-h-l} (2 \pi )^{1-h+k}
   (2+s)^{-h-l} \gamma ^{1-n} \binom{p}{l} \binom{-l+p}{h} }{j!}\\
\times \left((-1)^{1+m} (2+m)^p (2+s)^{h+l} \gamma ^m \Phi \left(e^{2 i m \pi },h-k,-\frac{i (i \pi +\log (a)+\log (\gamma ))}{2 \pi }\right)\right. \\ \left.
+(-1)^s
   (2+m)^{h+l} (2+s)^p \gamma ^s \Phi \left(e^{2 i \pi  s},h-k,-\frac{i (i \pi +\log (a)+\log (\gamma ))}{2 \pi }\right)\right) (1-h+k)_h S_j^{(p)}
\end{multline}
\end{example}
\begin{example}
\begin{equation}
\int_0^{\infty } \frac{\log (x) \log (\log (x))}{(1-x) \sqrt{x}} \, dx=\pi ^2 \log \left(\frac{A^{12}}{4 \sqrt[3]{2} e \pi  i}\right)
\end{equation}
\end{example}
\begin{example}
\begin{multline}
\int_0^{\infty } \frac{\sqrt{\log (x)} \log (\log (x))}{(1-x) \sqrt{x}} \, dx\\
=(1+i) \pi ^{3/2} \left(\left(-1+2 \sqrt{2}\right) (\pi -2 i \log (2 \pi )) \zeta \left(-\frac{1}{2}\right)+2 i
   Li_{-\frac{1}{2}}\left(-1\right)\right)
\end{multline}
\end{example}
\section{Definite integrals involving Cauchy principal value}
In this section we look at special cases using equation (\ref{eq:theorem}) where the definite integral has at least one pole. The treatment of such integrals is detailed in Arfken et al. \cite{arfken} pp. 457-460. and Chapter 4 pp. 102 in Gordon \cite{gordon}. In some cases l;Hopital's rule was applied to the right-hand side in order to simplify the closed form solution.
\begin{example}
\begin{multline}
\int_0^{\infty } \frac{\left(x^m-x^s\right) \log ^k(a x)}{b^2-x^2} \, dx\\
=-\frac{2^k (i \pi )^{1+k} }{b}\left(\left((-1)^m (-b)^m \Phi \left(e^{2 i m \pi },-k,-\frac{i (i \pi +\log (a)+\log
   (-b))}{2 \pi }\right)\right.\right. \\ \left.\left.
-(-1)^m b^m \Phi \left(e^{2 i m \pi },-k,-\frac{i (i \pi +\log (a)+\log (b))}{2 \pi }\right)\right.\right. \\ \left.\left.
+(-1)^s \left(-(-b)^s \Phi \left(e^{2 i \pi  s},-k,-\frac{i (i \pi +\log
   (a)+\log (-b))}{2 \pi }\right)\right.\right.\right. \\ \left.\left.\left.
+b^s \Phi \left(e^{2 i \pi  s},-k,-\frac{i (i \pi +\log (a)+\log (b))}{2 \pi }\right)\right)\right) \right)
\end{multline}
where $-1< Re(m)<1,-1< Re(s)<1,Im(b)\geq 0$.
\end{example}
\begin{example}
\begin{multline}
\int_0^{\infty } \frac{\log (\log (x))}{\sqrt{x} (x+1) \log (x)} \, dx=\frac{1}{4} \left(\pi ^2-2 i \pi  \log (4 \pi )-2 i \left(\gamma _1\left(\frac{1}{4}\right)-\gamma
   _1\left(\frac{3}{4}\right)\right)\right)
\end{multline}
\end{example}
\begin{example}
\begin{multline}
\int_0^{\infty } \frac{x^m-x^s}{\left(-b^2+x^2\right) \left(a^2-\log ^2(x)\right)} \, dx\\
=\frac{(-1)^{1+m} (-b)^m}{4 a b} \left(\Phi \left(e^{2 i m \pi },1,\frac{i a+\pi -i \log (-b)}{2 \pi
   }\right)\right. \\ \left.
-\Phi \left(e^{2 i m \pi },1,-\frac{i (a+i \pi +\log (-b))}{2 \pi }\right)\right)+(-1)^m b^m \left(\Phi \left(e^{2 i m \pi },1,\frac{i a+\pi -i \log (b)}{2 \pi }\right)\right. \\ \left.
-\Phi
   \left(e^{2 i m \pi },1,-\frac{i (a+i \pi +\log (b))}{2 \pi }\right)\right)+(-1)^s (-b)^s \left(\Phi \left(e^{2 i \pi  s},1,\frac{i a+\pi -i \log (-b)}{2 \pi }\right)\right. \\ \left.
-\Phi \left(e^{2 i \pi
    s},1,-\frac{i (a+i \pi +\log (-b))}{2 \pi }\right)\right)+(-1)^{1+s} b^s \left(\Phi \left(e^{2 i \pi  s},1,\frac{i a+\pi -i \log (b)}{2 \pi }\right)\right. \\ \left.
-\Phi \left(e^{2 i \pi  s},1,-\frac{i
   (a+i \pi +\log (b))}{2 \pi }\right)\right)
\end{multline}
where $Im(b)< 0,Re(b)>0$.
\end{example}
\begin{example}
\begin{multline}
\int_0^{\infty } \frac{\left(x^m-x^s\right) (x+\gamma )^{-n} \log ^k(a x)}{b^2-x^2} \, dx\\
=-\frac{2^k (i \pi )^{1+k}}{b} \left((b+\gamma )^{-n} \left((-1)^m (-b)^m \Phi \left(e^{2 i m \pi
   },-k,-\frac{i (i \pi +\log (a)+\log (-b))}{2 \pi }\right)\right.\right. \\ \left.\left.
-(-1)^s (-b)^s \Phi \left(e^{2 i \pi  s},-k,-\frac{i (i \pi +\log (a)+\log (-b))}{2 \pi }\right)\right)\right. \\ \left.
+(-b+\gamma )^{-n}
   \left(-(-1)^m b^m \Phi \left(e^{2 i m \pi },-k,-\frac{i (i \pi +\log (a)+\log (b))}{2 \pi }\right)\right.\right. \\ \left.\left.
+(-1)^s b^s \Phi \left(e^{2 i \pi  s},-k,-\frac{i (i \pi +\log (a)+\log (b))}{2 \pi
   }\right)\right)\right)\\
+\sum _{j=0}^{n-1} \sum _{p=0}^j \sum _{l=0}^p \sum _{h=0}^{p-l} \frac{(-1)^{1+j-l} 2^{-h+k} (1+m)^{-h-l} (i \pi )^{1-h+k} (1+s)^{-h-l} (-b+\gamma )^{-n}
   (b+\gamma )^{-n} }{b j!}\\ \times \left(-\left(1-\frac{b}{\gamma }\right)^j (b+\gamma )^n+(-b+\gamma )^n \left(\frac{b+\gamma }{\gamma }\right)^j\right) \binom{p}{l} \binom{-l+p}{h} \\
\left(-(-1)^m (1+m)^p
   (1+s)^{h+l} \gamma ^m \Phi \left(e^{2 i m \pi },h-k,-\frac{i (i \pi +\log (a)+\log (\gamma ))}{2 \pi }\right)\right. \\ \left.
+(-1)^s (1+m)^{h+l} (1+s)^p \gamma ^s \Phi \left(e^{2 i \pi  s},h-k,-\frac{i
   (i \pi +\log (a)+\log (\gamma ))}{2 \pi }\right)\right) (1-h+k)_h S_j^{(p)}
\end{multline}
where $Im(b)\leq 0$.
\end{example}
\begin{example}
\begin{multline}
\int_0^{\infty } \frac{(1-x) \log (\log (x))}{\sqrt{x} \left(-b^2+x^2\right) (x+\gamma )} \, dx\\
=\left(-\frac{1}{4}+\frac{i \log (b)}{4 \pi }\right) \left(-\frac{i (1+b) \pi ^2}{2
   b^{3/2} (b-\gamma )}-\frac{(1+b) \pi  (\log (4)+\log (\pi ))}{b^{3/2} (b-\gamma )}\right)\\
+\left(\frac{1}{2}-\frac{\pi -i \log (b)}{4 \pi }\right) \left(\frac{i (1+b) \pi ^2}{2 b^{3/2}
   (b-\gamma )}+\frac{(1+b) \pi  (\log (4)+\log (\pi ))}{b^{3/2} (b-\gamma )}\right)\\
+\left(\frac{1}{2}-\frac{\pi -i \log (-b)}{4 \pi }\right) \left(-\frac{i (-1+b) \pi ^2}{2 \sqrt{-b} b
   (b+\gamma )}+\frac{(1-b) \pi  (\log (4)+\log (\pi ))}{\sqrt{-b} b (b+\gamma )}\right)\\
+\left(-\frac{1}{4}+\frac{i \log (-b)}{4 \pi }\right) \left(\frac{i (-1+b) \pi ^2}{2 \sqrt{-b} b
   (b+\gamma )}+\frac{(-1+b) \pi  (\log (4)+\log (\pi ))}{\sqrt{-b} b (b+\gamma )}\right)\\
+\left(-\frac{i \pi ^2 (1+\gamma )}{(b-\gamma ) \sqrt{\gamma } (b+\gamma )}-\frac{2 \pi  (1+\gamma )
   (\log (4)+\log (\pi ))}{(b-\gamma ) \sqrt{\gamma } (b+\gamma )}\right) \left(\frac{1}{2}-\frac{\pi -i \log (\gamma )}{4 \pi }\right)\\
+\left(\frac{i \pi ^2 (1+\gamma )}{(b-\gamma )
   \sqrt{\gamma } (b+\gamma )}+\frac{2 \pi  (1+\gamma ) (\log (4)+\log (\pi ))}{(b-\gamma ) \sqrt{\gamma } (b+\gamma )}\right) \left(-\frac{1}{4}+\frac{i \log (\gamma )}{4 \pi
   }\right)\\
+\frac{(-1+b) \pi  \left(\frac{1}{2} (-\log (2)-\log (\pi ))+\log \left(-1+\frac{\pi -i \log (-b)}{4 \pi }\right)+\text{log$\Gamma $}\left(-1+\frac{\pi -i \log (-b)}{4 \pi
   }\right)\right)}{\sqrt{-b} b (b+\gamma )}\\
+\frac{(1-b) \pi  \left(\frac{1}{2} (-\log (2)-\log (\pi ))+\log \left(-\frac{1}{4}-\frac{i \log (-b)}{4 \pi }\right)+\text{log$\Gamma
   $}\left(-\frac{1}{4}-\frac{i \log (-b)}{4 \pi }\right)\right)}{\sqrt{-b} b (b+\gamma )}\\
-\frac{(1+b) \pi  \left(\frac{1}{2} (-\log (2)-\log (\pi ))+\log \left(-1+\frac{\pi -i \log (b)}{4
   \pi }\right)+\text{log$\Gamma $}\left(-1+\frac{\pi -i \log (b)}{4 \pi }\right)\right)}{b^{3/2} (b-\gamma )}\\
+\frac{(1+b) \pi  \left(\frac{1}{2} (-\log (2)-\log (\pi ))+\log
   \left(-\frac{1}{4}-\frac{i \log (b)}{4 \pi }\right)+\text{log$\Gamma $}\left(-\frac{1}{4}-\frac{i \log (b)}{4 \pi }\right)\right)}{b^{3/2} (b-\gamma )}\\
+\frac{2 \pi  (1+\gamma )
   \left(\frac{1}{2} (-\log (2)-\log (\pi ))+\log \left(-1+\frac{\pi -i \log (\gamma )}{4 \pi }\right)+\text{log$\Gamma $}\left(-1+\frac{\pi -i \log (\gamma )}{4 \pi
   }\right)\right)}{(b-\gamma ) \sqrt{\gamma } (b+\gamma )}\\
-\frac{2 \pi  (1+\gamma ) \left(\frac{1}{2} (-\log (2)-\log (\pi ))+\log \left(-\frac{1}{4}-\frac{i \log (\gamma )}{4 \pi
   }\right)+\text{log$\Gamma $}\left(-\frac{1}{4}-\frac{i \log (\gamma )}{4 \pi }\right)\right)}{(b-\gamma ) \sqrt{\gamma } (b+\gamma )}
\end{multline}
where $Im(b)\leq 0$.
\end{example}
\begin{example}
\begin{multline}
\int_0^{\infty } \frac{\log (\log (x))}{\sqrt{x} \left(1-x^2\right)} \, dx=\frac{1}{4} \pi  \left((-1+i) \pi +(4+4 i) \log (2)+2 \log \left(\frac{\pi  \Gamma
   \left(-\frac{1}{4}\right)^2}{9 \Gamma \left(-\frac{3}{4}\right)^2}\right)\right)
\end{multline}
\end{example}
\begin{example}
\begin{multline}
\int_0^{\infty } \frac{\left(-1+\sqrt{x}\right) \log (\log (x))}{\sqrt[4]{x} \left(b^2-x^2\right) (x+\gamma )} \, dx\\
=\frac{\left(\frac{1}{4}+\frac{i}{4}\right) \sqrt[4]{-1} \pi }{\sqrt[4]{-b} b^{5/4}
   (b-\gamma ) \sqrt[4]{\gamma } (b+\gamma )}
   \left(\pi  \sqrt[4]{\gamma } \left(\left(1+\sqrt{-b}\right) \sqrt[4]{b} (b-\gamma )+\left(1+\sqrt{b}\right) \sqrt[4]{-b} (b+\gamma )\right)\right. \\ \left.
+4 i \sqrt[4]{-b} b^{5/4} \left(1+\sqrt{\gamma
   }\right) \log (2)+4 i \sqrt[4]{-b} b^{5/4} \left(1+\sqrt{\gamma }\right) \log (i \pi )-2 i \sqrt[4]{\gamma } \left(\left(1+\sqrt{-b}\right) \sqrt[4]{b} (b-\gamma )\right.\right. \\ \left.\left.
+\left(1+\sqrt{b}\right)
   \sqrt[4]{-b} (b+\gamma )\right) \log (2 \pi )+(2+2 i) \sqrt[4]{\gamma } \left(\sqrt[4]{b} (b-\gamma ) \left(\left(i+\sqrt{-b}\right) \log \left(\frac{\Gamma \left(\frac{\pi -i \log
   (-b)}{8 \pi }\right)}{2 \Gamma \left(\frac{5}{8}-\frac{i \log (-b)}{8 \pi }\right)}\right)\right.\right.\right. \\ \left.\left.\left.
+\left(1+i \sqrt{-b}\right) \log \left(\frac{\Gamma \left(\frac{3}{8}-\frac{i \log (-b)}{8 \pi
   }\right)}{2 \Gamma \left(\frac{7}{8}-\frac{i \log (-b)}{8 \pi }\right)}\right)\right)+\sqrt[4]{-b} (b+\gamma ) \left(\left(i+\sqrt{b}\right) \log \left(\frac{\Gamma \left(\frac{\pi -i
   \log (b)}{8 \pi }\right)}{2 \Gamma \left(\frac{5}{8}-\frac{i \log (b)}{8 \pi }\right)}\right)\right.\right.\right. \\ \left.\left.\left.
+\left(1+i \sqrt{b}\right) \log \left(\frac{\Gamma \left(\frac{3}{8}-\frac{i \log (b)}{8 \pi
   }\right)}{2 \Gamma \left(\frac{7}{8}-\frac{i \log (b)}{8 \pi }\right)}\right)\right)\right)\right. \\ \left.
+\frac{(4+4 i) b^{9/4} \left(\left(i+\sqrt{\gamma }\right) \log \left(\frac{\Gamma
   \left(\frac{\pi -i \log (\gamma )}{8 \pi }\right)}{2 \Gamma \left(\frac{5}{8}-\frac{i \log (\gamma )}{8 \pi }\right)}\right)+\left(1+i \sqrt{\gamma }\right) \log \left(\frac{\Gamma
   \left(\frac{3}{8}-\frac{i \log (\gamma )}{8 \pi }\right)}{2 \Gamma \left(\frac{7}{8}-\frac{i \log (\gamma )}{8 \pi }\right)}\right)\right)}{(-b)^{3/4}}\right)
\end{multline}
where $Im(b)\leq 0$.
\end{example}
\begin{example}
\begin{multline}
\int_0^{\infty } \frac{\log (\log (x))}{\left(1+\sqrt{x}\right) \sqrt[4]{x} \left(1-x^2\right)} \, dx\\
=\frac{1}{8} \pi  \left(-2 i+\left(-1+2 i \sqrt{2}\right) \pi -4 \log (\pi )+\log
   \left(\frac{64^{(-2+i)+2 \sqrt{2}} \Gamma \left(\frac{1}{4}\right)^8}{\Gamma \left(\frac{3}{4}\right)^8}\right)\right. \\ \left.
+4 \sqrt{2} \log \left(\frac{\pi  \Gamma \left(\frac{5}{8}\right) \Gamma
   \left(\frac{7}{8}\right)}{\Gamma \left(\frac{1}{8}\right) \Gamma \left(\frac{3}{8}\right)}\right)\right)
\end{multline}
\end{example}
\begin{example}
\begin{multline}
\int_0^{\infty } \frac{\sqrt{x} \log (\log (x))}{(-1+x)^2 (1+x)} \,
   dx=\frac{1}{2} \left(-\log (2)+\pi  \log \left(-\frac{(1+i) \sqrt{\frac{2}{\pi
   }} \Gamma \left(\frac{1}{4}\right)}{\Gamma
   \left(-\frac{1}{4}\right)}\right)\right)
\end{multline}
\end{example}
\begin{example}
\begin{multline}
\int_0^{\infty } \frac{\left(\sqrt{x}-x\right) \log (\log (x))}{(-1+x)^2
   \left(1-x^2\right)} \, dx\\
=\frac{1}{32} \left(-8 \log (2)+\pi  \left(i-(1+2 i)
   \pi -4 \log (\pi )+8 \log \left(-\frac{2 \Gamma
   \left(\frac{1}{4}\right)}{\Gamma
   \left(-\frac{1}{4}\right)}\right)\right)\right)
\end{multline}
\end{example}
\begin{example}
\begin{multline}
\int_0^{\infty } \frac{\left(-1+\sqrt[4]{x}\right) \log (\log (x))}{\sqrt{x} \left(-b^2+x^2\right) (x+\gamma )} \, dx\\
=\frac{1}{4 b^2 (b-\gamma ) \sqrt{\gamma } (b+\gamma )}\left(-4 \sqrt{2} b^2 \pi  \sqrt[4]{\gamma } \left(\frac{i \pi}{2}+\log (2 \pi )\right)\right. \\ \left.
-2 \sqrt{2} (-b)^{3/4} \pi  (b-\gamma ) \sqrt{\gamma } \left(\frac{i \pi }{2}+\log (2 \pi )\right)+\sqrt{-b} (b-\gamma ) \sqrt{\gamma } (\pi -i \log (-b))
   \left(\frac{i \pi }{2}+\log (4 \pi )\right)\right. \\ \left.
+\sqrt{-b} (b-\gamma ) \sqrt{\gamma } (\pi +i \log (-b)) \left(\frac{i \pi }{2}+\log (4 \pi )\right)-\sqrt{b} \sqrt{\gamma } (b+\gamma ) (\pi -i
   \log (b)) \left(\frac{i \pi }{2}+\log (4 \pi )\right)\right. \\ \left.
-\sqrt{b} \sqrt{\gamma } (b+\gamma ) (\pi +i \log (b)) \left(\frac{i \pi }{2}+\log (4 \pi )\right)+\sqrt{2} b^{3/4} \pi  \sqrt{\gamma
   } (b+\gamma ) \left(i \pi +\log \left(4 \pi ^2\right)\right)\right. \\ \left.
+b^2 (i \pi +2 \log (4 \pi )) (\pi -i \log (\gamma ))\right. \\ \left.
+b^2 (i \pi +2 \log (4 \pi )) (\pi +i \log (\gamma ))+4 \sqrt[4]{-1}
   (-b)^{3/4} \pi  (b-\gamma ) \sqrt{\gamma } \left(\log \left(\frac{\Gamma \left(\frac{\pi -i \log (-b)}{8 \pi }\right)}{2 \Gamma \left(\frac{5}{8}-\frac{i \log (-b)}{8 \pi
   }\right)}\right)\right.\right. \\ \left.\left.
-i \log \left(\frac{\Gamma \left(\frac{3}{8}-\frac{i \log (-b)}{8 \pi }\right)}{2 \Gamma \left(\frac{7}{8}-\frac{i \log (-b)}{8 \pi }\right)}\right)\right)-4 \sqrt[4]{-1}
   b^{3/4} \pi  \sqrt{\gamma } (b+\gamma ) \left(\log \left(\frac{\Gamma \left(\frac{\pi -i \log (b)}{8 \pi }\right)}{2 \Gamma \left(\frac{5}{8}-\frac{i \log (b)}{8 \pi }\right)}\right)\right.\right. \\ \left.\left.
-i \log \left(\frac{\Gamma \left(\frac{3}{8}-\frac{i \log (b)}{8 \pi }\right)}{2 \Gamma \left(\frac{7}{8}-\frac{i \log (b)}{8 \pi }\right)}\right)\right)+8 \sqrt[4]{-1} b^2 \pi 
   \sqrt[4]{\gamma } \left(\log \left(\frac{\Gamma \left(\frac{\pi -i \log (\gamma )}{8 \pi }\right)}{2 \Gamma \left(\frac{5}{8}-\frac{i \log (\gamma )}{8 \pi }\right)}\right)-i \log
   \left(\frac{\Gamma \left(\frac{3}{8}-\frac{i \log (\gamma )}{8 \pi }\right)}{2 \Gamma \left(\frac{7}{8}-\frac{i \log (\gamma )}{8 \pi }\right)}\right)\right)\right. \\ \left.
-2 \sqrt{-b} \pi  (b-\gamma )
   \sqrt{\gamma } \left(\log \left(32 \pi ^3\right)-2 \log (-\pi -i \log (-b))-2 \text{log$\Gamma $}\left(-\frac{\pi +i \log (-b)}{4 \pi }\right)\right)\right. \\ \left.
+2 \sqrt{-b} \pi  (b-\gamma )
   \sqrt{\gamma } \left(\log \left(32 \pi ^3\right)-2 \log (-3 \pi -i \log (-b))-2 \text{log$\Gamma $}\left(-\frac{3}{4}-\frac{i \log (-b)}{4 \pi }\right)\right)\right. \\ \left.
+2 \sqrt{b} \pi  \sqrt{\gamma
   } (b+\gamma ) \left(\log \left(32 \pi ^3\right)-2 \log (-\pi -i \log (b))-2 \text{log$\Gamma $}\left(-\frac{\pi +i \log (b)}{4 \pi }\right)\right)\right. \\ \left.
-2 \sqrt{b} \pi  \sqrt{\gamma } (b+\gamma
   ) \left(\log \left(32 \pi ^3\right)-2 \log (-3 \pi -i \log (b))-2 \text{log$\Gamma $}\left(-\frac{3}{4}-\frac{i \log (b)}{4 \pi }\right)\right)\right. \\ \left.
-4 b^2 \pi  \left(\log \left(32 \pi
   ^3\right)-2 \log (-\pi -i \log (\gamma ))-2 \text{log$\Gamma $}\left(-\frac{\pi +i \log (\gamma )}{4 \pi }\right)\right)\right. \\ \left.
+4 b^2 \pi  \left(\log \left(32 \pi ^3\right)-2 \log (-3 \pi -i
   \log (\gamma ))-2 \text{log$\Gamma $}\left(-\frac{3}{4}-\frac{i \log (\gamma )}{4 \pi }\right)\right)\right)
\end{multline}
where $Im(\gamma)\geq 0,Im(b)\leq 0,Re(b)\geq 0$.
\end{example}
\begin{example}
\begin{multline}
\int_0^{\infty } \frac{\left(-1+\sqrt[4]{x}\right) \log (\log (x))}{(-1+x)^2 \sqrt{x} (1+x)} \, dx\\
=\frac{1}{16} \left(\left((1-5 i)+2 i \sqrt{2}\right) \pi ^2+\log (16)+2 \pi 
   \left(-i-(13-i) \log (2)+\sqrt{2} \log (64)\right.\right. \\ \left.\left.
+\left(-5+2 \sqrt{2}\right) \log (\pi )+6 \log \left(\Gamma \left(\frac{1}{4}\right)\right)-6 \log \left(\Gamma
   \left(\frac{3}{4}\right)\right)\right.\right. \\ \left.\left.
+\log \left(81 \left(\frac{\Gamma \left(\frac{5}{8}\right)}{\Gamma \left(\frac{1}{8}\right)}\right)^{4 \sqrt[4]{-1}} \left(\frac{\Gamma
   \left(\frac{3}{8}\right)}{\Gamma \left(\frac{7}{8}\right)}\right)^{4 (-1)^{3/4}}\right)+4 \text{log$\Gamma $}\left(-\frac{3}{4}\right)-4 \text{log$\Gamma
   $}\left(-\frac{1}{4}\right)\right)\right)
\end{multline}
\end{example}
%
%

%

\begin{thebibliography}{999}
%
\bibitem{grad} Gradshteyn, I.S.; Ryzhik, I.M.
\emph{Tables of Integrals, Series and Products}, edited by Daniel Zwillinger and Victor Moll
(Orlando, Florida: Academic Press, (\textbf{2014})) 1,184 pages.
%

 \bibitem{moll}Moll, V.H. (\textbf{2014}). 
 \emph{Special Integrals of Gradshteyn and Ryzhik: the Proofs - Volume I}, (1st ed.). Chapman and Hall/CRC. https://doi.org/10.1201/b17674
 %
 
  \bibitem{malmsten}Malmsten, C.J., Almgren, T.A., Camitz, G., Danelius, D., Moder, D.H., Selander, E., Grenander, J.M.A., Themptander, S., Trozelli, L.M., Föräldrar, Ä., Ossbahr, G.E., Föräldrar, D.H., Ossbahr, C.O., Lindhagen, C.A., Moder, D.H., Syskon, Ä., Lemke, O.V., Fries, C., Laurenius, L., Leijer, E., Gyllenberg, G., Morfader, M.V., Linderoth, A.: Specimen analyticum, theoremata quædam nova de integralibus definitis, summatione serierum earumque in alias series transformatione exhibens (Eng. trans.: “Some new theorems about the definite integral, summation of the series and their transformation into other series”) [Dissertation, in 8 parts]. Upsaliæ, excudebant Regiæ academiæ typographi. Uppsala, Sweden (April–June \textbf{1842}) 
 %
 \bibitem{malmsten1}Malmst\'{e}n, C.J.: De integralibus quibusdam definitis seriebusque infinitis (Eng. trans.: “On some definite integrals and series”). J. Reine Angew. Math., 38, 1–39 (\textbf{1849}) [work dated May 1, \textbf{1846}] 
 %
 
  \bibitem{blagouchine}Blagouchine, I.V. 
 \emph{Rediscovery of Malmsten’s integrals, their evaluation by contour integration methods and some related results}. Ramanujan J 35, 21–110 (\textbf{2014}). https://doi.org/10.1007/s11139-013-9528-5
 %
 
 \bibitem{moll1}V. H. Moll 
\emph{Some Questions in the Evaluation of Definite Integrals}. MAA Short Course, San Antonio, TX. Jan. \textbf{2006}).
 %
 
  \bibitem{medina}Medina, Luis A.; Moll, Victor H. (\textbf{2009}). 
 \emph{"A class of logarithmic integrals"}. Ramanujan J. 20 (1): 91–126. arXiv:0808.2750. 
 doi:10.1007/s11139-008-9148-7. ISSN 1382-4090. MR 2546186. S2CID 115174350
 %
 
  \bibitem{bdh}D. Bierens de Haan (\textbf{1867}) Nouvelles Tables d’Intégrales Définies. P. Engels, Leide. 
%

\bibitem{reyn4} Reynolds, R.; Stauffer, A.
{A Method for Evaluating Definite Integrals in Terms of Special Functions with Examples}.  \emph{Int. Math. Forum} (\textbf{2020}), \emph{15}, 235--244, doi:10.12988/imf.2020.91272 
%

 \bibitem{erdt1}A. Erdélyi, W. Magnus, F. Oberhettinger, and F. G. Tricomi (\textbf{1954}b) 
 \emph{Tables of Integral Transforms}. Vol. I. McGraw-Hill Book Company, Inc., New York-Toronto-London. 
 %

 \bibitem{erdt2}A. Erdélyi, W. Magnus, F. Oberhettinger, and F. G. Tricomi (\textbf{1954}b) 
 \emph{Tables of Integral Transforms}. Vol. II. McGraw-Hill Book Company, Inc., New York-Toronto-London. 
 %

\bibitem{gordon}Gordon, Ron. Complex Integration: A Compendium of Smart and Little-Known Techniques for Evaluating Integrals and Sums. Germany: Springer International Publishing, (\textbf{2023}). Chapter 4, pp. 103

%
\bibitem{widder}Widder, D.V., (\textbf{1941}): \emph{The Laplace transform}. Princeton University Press, Princeton, N.J.
 %
 
 \bibitem{titchmarsh}Titchmarsh, E.C., (\textbf{1937}): \emph{Introduction to the theory of Fourier integrals}. Oxford.
 %
 
  \bibitem{shohat}Shohat, J.A. and J.D. Tamarkin, (\textbf{1943}): \emph{The problem of moments}. Amer. Math. Soc. New York.
 %
 
 \bibitem{venturino}VENTURINO, EZIO. 
 \emph{"SIMPLE QUADRATURE FOR SINGULAR INTEGRALS."}, The Journal of Integral Equations and Applications 5, no. 2 (\textbf{1993}): 257-76. http://www.jstor.org/stable/26163067.
 %
 
 
 \bibitem{damelin}Damelin, S.B., and A.B.J. Kuijlaars. 
 \emph{"The Support of the Equilibrium Measure in the Presence of a Monomial External Field on [-1,1]." },Transactions of the American Mathematical Society 351, no. 11 (\textbf{1999}): 4561-84. 
 http://www.jstor.org/stable/117961.
 %
 
  \bibitem{georgiadis}Georgiadis, H. 
 \emph{Tangential-displacement effects in the wedge indentation of an elastic half-space - an integral-equation approach}. Computational Mechanics 21, 347-352 (\textbf{1998}). 
 https://doi.org/10.1007/s004660050311
 %
 
  \bibitem{brock}Brock, L.M., Georgiadis, H.G. 
 \emph{Dynamic frictional indentation of an elastic half-plane by a rigid punch}. J Elasticity 35, 223-249 (\textbf{1994}). 
 https://doi.org/10.1007/BF00115544
 %


\bibitem{gelca} Gelca, Răzvan., Andreescu, Titu.,
\emph{Putnam and Beyond}. Germany: Springer International Publishing, 2017.
 %
 
  \bibitem{prud1}A. P. Prudnikov, Yu. A. Brychkov, and O. I. Marichev (\textbf{1986}a) 
 \emph{Integrals and Series: Elementary Functions}, Vol. 1. Gordon \& Breach Science Publishers, New York. 
 %
 
  \bibitem{kolbig}K.S. K\"{o}lbig, Explicit evaluation of certain definite integrals involving powers of logarithms, Journal of Symbolic Computation, Volume 1, Issue 1, (\textbf{1985}), Pages 109-114, ISSN 0747-7171,
https://doi.org/10.1016/S0747-7171(85)80032-8.
 %
 
 \bibitem{victor}Victor Adamchik.( \textbf{1997}). 
 \emph{A class of logarithmic integrals}. In Proceedings of the 1997 international symposium on Symbolic and algebraic computation (ISSAC '97). Association for Computing Machinery, New York, NY, USA, 1-8. 
 https://doi.org/10.1145/258726.258736
 %
 
  \bibitem{geddes}Geddes, K.O., Glasser, M.L., Moore, R.A. et al. \emph{Evaluation of classes of definite integrals involving elementary functions via differentiation of special functions}. AAECC 1, 149-165 (\textbf{1990}). 
 https://doi.org/10.1007/BF01810298
 %
 
  \bibitem{grobner}W. Gr\"{o}bner and N. Hofreiter (\textbf{1949}) 
 \emph{Integraltafel. Erster Teil. Unbestimmte Integrale}. Springer-Verlag, Vienna. 
 %
 
 
  \bibitem{arfken}Arfken, G. Mathematical Methods for Physicists, 6th ed. Orlando, FL: Academic Press, pp. 457-460, (\textbf{2005}).
 %
 
 %
\end{thebibliography}
\end{document}